\newtheorem{theorem}{Theorem}[section]
\newtheorem{definition}{Definition}[section]
\newtheorem{lemma}[theorem]{Lemma}
\newtheorem{corollary}[theorem]{Corollary}
\theoremstyle{remark}
\newtheorem*{remark}{Remark}
\newtheorem*{example}{Example}
\title{Derived Geometry and Non-Linear Differential Equations on the Punctured Disc}
\author{EMILE BOUAZIZ}
\begin{document}
\maketitle
%\section{}
%\subsection{} 

\begin{abstract} We study non-linear differential equations on the punctured formal disc by considering the natural derived enhancements of their spaces of solutions. In particular, by appealing to results of the inverse theory in the calculus of variations, we show that a variational formulation of a differential equation is \emph{equivalent} to the residue pairing inducing a (-1)-symplectic form on the derived space of solutions equipped with a certain decoration of its tangent complex. \end{abstract}

\section{introduction}

We work  mainly over the formal punctured disc $\Delta^{*}$. This is defined as the spectrum of $\mathcal{K}:=k((z))$, where $k$ is our base field of characteristic $0$. On $\Delta^{*}$ we will consider differential equations (possibly non-linear, of arbitrary order) $$D(z,y(z),y'(z),y''(z),...)=0,$$ and their spaces of solutions. In fact, we will consider their \emph{derived} spaces of solutions, $\operatorname{dSol}(D)$, which contain some more information. This is a derived ind-scheme cut out by the equations on the Laurent coefficients $y_{j}$ implied by substituting $$y(z)=\sum_{i\in\mathbb{Z}}y_{i}z^{i}$$ into the equation $D=0$. Note that the Laurent series in question is really infinite in both directions as the coefficients are naturally valued in the ring of functions on the ind-scheme of loops into $\mathbb{A}^{1}$. This is responsible for the ind-structure on $\operatorname{dSol}$. We will also consider equations on $\Delta$, the unpunctured disc, with ring of functions $\mathcal{O}:=k[[z]]$. In this case we do not have any ind-structure.

Our main goal is to give a characterisation of \emph{variational} differential equations in terms of the interaction of the residue form on $\mathcal{K}$ and the cotangent complex of the derived space of solutions, $\mathbb{L}(\operatorname{dSol}(D))$. In fact, we will need to decorate $\operatorname{dSol}(D)$ a little, by including as data a lift, along the natural map, of the tangent complex to the loop space of $\mathbb{A}^{1}$. More specifically, we will see that demanding that the residue form induce (in a sense that we will make precise), a \emph{Tate (-1)-symplectic} structure on $\operatorname{dSol}(D)$ is equivalent to $D=0$ having a variational formulation. Of course, one direction is at least morally obvious, given the Euler-Lagrange equations. We remark that there are evident generalisations to collections of differential equations $D_{j}=0$ in functions $y_{i}(z)$, although we will stick to the case of one equation in one dependent variable for ease of exposition.

\section{Basic Properties}
\subsection{Loop Spaces and Tate Derived Ind-Schemes}We will make use of certain quite large algebro-geometrical objects, which unfortunately requires a little technology. The initiated reader should skip this section. We stress that whilst the objects are at first glance somewhat formidable (derived ind-schemes and Tate sheaves on them), the results and computations are simple, and can be understood without a full knowledge of the technology. We will only give a very brief overview of the relevant notions, the reader is referred to \cite{Dr}, \cite{GaiRoz},\cite{He} and \cite{Pr} for much better accounts of the theory. 

We will often say \emph{category} when we often mean $\infty$- such. The category of \emph{derived algebras} is by definition the $\infty$ categorical localisation of commutative differential algebras, $\mathbf{CDGA}_{k}$, at quasi-isomorphisms. We will consider all our derived spaces as living in the category $\mathbf{Pst}_{k}$ of \emph{pre-stacks} over $k$. This is the category of functors from derived algebras to spaces, see \cite{GaiRoz}. Inside $\mathbf{Pst}_{k}$ we have the category of \emph{derived affine schemes}, $\mathbf{dAff}_{k}$, opposite to the category of derived algebras. Further we have the category of \emph{derived schemes}, $\mathbf{dSch}_{k}$ and its category of ind-objects (along ind- systems of closed embeddings) $\mathbf{IndSch}_{k}$, living inside $\mathbf{Pst}_{k}$.
 
For a derived ind-scheme $X$ we have the category of sheaves, $QC(X)$, and its subcategory of perfect objects, $Perf(X)$. Inside $\mathbf{Pro}(QC(X))$ we have the cotangent complex $\mathbb{L}_{X}$, cf. \cite{GaiRoz}. In order to treat the cotangent and tangent complexes on the same footing we will work with the category of \emph{Tate sheaves} on $X$. This category, $\mathbf{Tate}(X)$, is contained in $\mathbf{Pro}(QC(X))$ and contains the images of both $QC(X)$ and $\mathbf{Pro}(Perf(X))$. It is further endowed with a natural duality interchanging these images, cf \cite{He2}. 

\begin{remark} We will only deal with ind- \emph{affine} derived schemes, for which we will have honest presentations as topological $\mathbf{CDGA}_{k}$, where by \emph{topological} we will always mean admitting a neighbourhood basis at $0$ consisting of open ideals. Similarly will treat pro-modules for our algebras as linearly topologized modules and leave implicit that the continuity of the maps constructed ensures a map of pro-systems. \end{remark}

\begin{remark} In the case of $X$ a point, $\mathbf{Tate}(X)$ is the category of \emph{locally linearly compact topological vector spaces}, cf. \cite{Dr}.The prototypial example of such is the $k$-vector space $\mathcal{K}$, endowed with the natural topology on Laurent series.\end{remark}

\begin{definition} We say that an ind-scheme $X$ is \emph{Tate} if the cotangent complex $\mathbb{L}_{X}\in\mathbf{Pro}(QC(X))$ lies in the subcategory $\mathbf{Tate}(X)$ of $X$. \end{definition}

Crucially, if $X$ is a Tate derived ind- scheme then one can speak of \emph{shifted symplectic forms} on $X$. 

The main example of a Tate space for us will be the loop space of the affine line, $\mathcal{L}\mathbb{A}^{1}$. This is represented by the ind- affine scheme $$\operatorname{colim}_{n}\operatorname{spec}k\left[\,y_{i}\,|\,i\geq -n\,\right].$$ The $A$ points of $\mathcal{L}\mathbb{A}^{1}$ are $A((z))$. The cotangent complex, in $\mathbf{Pro}(QC(\mathcal{L}\mathbb{A}^{1}))$, is globally trivial with fibre $\mathcal{K}$, whence is a Tate sheaf. The topological algebra of functions on $\mathcal{L}\mathbb{A}^{1}$ is by definition $$\mathcal{O}(\mathcal{L}\mathbb{A}^{1}):=\lim_{n}k\left[\,y_{i}\,|\,i\geq -n\,\right].$$

 \begin{definition} We define $$\mathcal{A}:=\lim_{n}k\left[\,y_{i}\,|\,i\geq -n\,\right]((z)),$$  noting that we naturally have $y(z),y'(z),...\in\mathcal{A}$. \end{definition}

The reader should bear in mind the following examples, which hopefully help give a feel for Tate symplectic forms on derived ind-schemes. \begin{example} \begin{itemize}\item Consider $\mathbb{A}^{2}$ with the standard symplectic form $dxdy$, then the loop space $\mathcal{L}\mathbb{A}^{2}$ has Tate symplectic form $\sum_{i}dx_{i}dy_{-i}$. \item We could also consider the $-1$-shifted cotangent bundle of $\mathbb{A}^{1}$, with coordinates $x$ and $\xi$ say. Then on the loop space of this we obtain a $-1$-shifted symplectic form $\sum_{i}dx_{i}d\xi_{-i}$.  Note that these are not finite sums of basic two forms, and are topologically convergent as $dx_{i}\rightarrow 0$ as $i\rightarrow -\infty$.\item Let $f$ be a function on the loop space of $\mathbb{A}^{1}$, for example we could take $f$ to be the constant term of $y(z)^{2}$. Then on the derived critical locus of this function we have a Tate $-1$ symplectic form. In fact in this case we obtain the loop space of the derived critical locus of the function $y^{2}$.
\item The loop space of a shifted symplectic derived scheme is Tate shifted symplectic, indeed we can pull-back the two form via the universal map to one on the product of the loop space with $\Delta^{*}$ and then integrate against $\frac{dz}{z}$, in accordance with the well known construction of symplectic forms on mapping spaces in field theory.\end{itemize}\end{example}

\subsection{Solution Spaces}
By a \emph{differential algebra} we will mean a (non-derived) $k$-algebra equipped with a distinguished derivation $\partial$. Maps of differential algebras are defined in the evident way. We will encode differential equations on $\Delta^{*}$ as elements of a certain differential algebra, which we now introduce. \begin{definition} Let $\mathcal{J}$ be the differential algebra defined by $$\mathcal{J}:=\mathcal{K}\left[x_{0},x_{1},x_{2},...\right],$$ equipped with the derivation $\partial_{z}$ which is defined to act on $x_{i}$ by sending it to $x_{i+1}$, and to act on $\mathcal{K}$ as differentiation by $z$.\end{definition}

\begin{remark} \begin{itemize}  \item If $D\in\mathcal{J}$, we write $(D)_{\partial}$ for the differential ideal generated by $D$. \item If $A$ is a $k$-algebra, denote by $\mathcal{J}_{A}$ the differential $A$-algebra $A((z))\left[x_{0},x_{1},x_{2},...\right]$.\item We will consider $A((z))$ as a differential algebra with differential $\partial_{z}$. \item We can speak of differential algebras over a base differential algebra, and maps between these.\end{itemize} \end{remark}

Now there is an obvious way to encode a differential equation $D=0$ as an element of $\mathcal{J}$, indeed we let $x_{i}$ correspond to $y^{(i)}(z)$. We can define then the space of solutions rather cleanly. First we introduce some notation which we will use throughout the sequel. \begin{definition}If $D\in\mathcal{J}$ we will denote by $D_{i}$, the function on $\mathcal{L}\mathbb{A}^{1}$ defined as the $z^{i}$ coefficient of $D(z,y(z),y'(z),y''(z),...)$. We will sometimes suggestively write $D_{i}$ as $$\int D(z,y,y',y'',...)z^{-1-i}dz$$ when we want to stress the analogy with integration. \end{definition}

\begin{remark} Notice that the sum $y(z)$ is not bounded in the Laurent direction, nonetheless the functions $D_{i}$ make sense in the topologial algebra of functions on $\mathcal{L}\mathbb{A}^{1}$. \end{remark}

\begin{definition} The functor, $\operatorname{Sol}(D)$, which sends an algebra $A$ to the set of maps of differential $A((z))$-algebras, $\mathcal{J}_{A}/(D)_{\partial}\rightarrow A((z))$, is referred to as the \emph{space of solutions} of $D$. \end{definition} \begin{remark} Let us note that a $k$-point of $\operatorname{Sol}(D)$, which is by definition a morphism of differential algebras, $\mathcal{J}/(D)_{\partial}\rightarrow \mathcal{K}$, really corresponds to a solution of $D$, in the natural sense. Indeed, compatibility with the differential structures on both sides means that the morphism is determined by the image of $x_{0}$. This image, $\gamma(z)$ say, is now easily seen to satisfy the equation $D$.\end{remark}

\begin{lemma} The functor $\operatorname{Sol}(D)$ is representable by an ind- affine scheme. \end{lemma}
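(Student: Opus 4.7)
The plan is to identify $\operatorname{Sol}(D)$ with an explicit closed sub-ind-scheme of the loop space $\mathcal{L}\mathbb{A}^{1}$ cut out, at each ind-stage, by the vanishing of the Laurent coefficients $D_{i}$.

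First I would unpack points. A map of differential $A((z))$-algebras $\mathcal{J}_{A}/(D)_{\partial}\to A((z))$ is forced, by compatibility with $\partial_{z}$, to send $x_{k}$ to $\gamma^{(k)}(z)$ where $\gamma\in A((z))$ is the image of $x_{0}$; the only additional constraint is $D(z,\gamma,\gamma',\ldots)=0$, because compatibility with $\partial_{z}$ then ensures all $\partial_{z}^{j}D$ also vanish. Thus $\operatorname{Sol}(D)$ sits inside the loop space $\mathcal{L}\mathbb{A}^{1}$, whose $A$-points are $A((z))$, as the closed subfunctor defined by the simultaneous vanishing of the topological functions $D_{i}\in\mathcal{O}(\mathcal{L}\mathbb{A}^{1})$.

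Next I would exploit the standard presentation $\mathcal{L}\mathbb{A}^{1}=\operatorname{colim}_{n}X_{n}$ with $X_{n}=\operatorname{Spec} k[y_{i}\mid i\geq -n]$ and transition maps the closed embeddings $y_{-n-1}\mapsto 0$. Restricted to a single $X_{n}$, each $D_{i}$ becomes a genuine polynomial in finitely many of the $y_{j}$, so the ideal they collectively generate defines an honest closed affine subscheme $S_{n}\subset X_{n}$. Since the restriction of $D_{i}$ to $X_{n}$ is exactly the specialization of its restriction to $X_{n+1}$ at $y_{-n-1}=0$, one has $S_{n}=S_{n+1}\times_{X_{n+1}}X_{n}$, which exhibits $S_{n}\hookrightarrow S_{n+1}$ as a closed embedding and so makes $\{S_{n}\}$ into an ind-system of the required kind.

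Finally I would match $A$-points to conclude $\operatorname{Sol}(D)=\operatorname{colim}_{n}S_{n}$: an element of $A((z))=A[[z]][z^{-1}]$ automatically has bounded pole order and hence factors through some $X_{n}(A)$, and it satisfies $D(z,\gamma,\ldots)=0$ precisely when every $D_{i}|_{X_{n}}$ vanishes on it, i.e.\ when it lies in $S_{n}(A)$. The one place that deserves some care is the first step, the assertion that a map of differential algebras is both determined by and constrained by exactly the data of $\gamma$ together with the single relation $D(z,\gamma,\ldots)=0$; beyond that the argument is entirely formal, as closed subfunctors of ind-affine schemes cut out at each ind-stage by honest ideals remain ind-affine.
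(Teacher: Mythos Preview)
Your argument is correct and follows exactly the approach indicated in the paper: the paper's proof is the single sentence that the closed subspace of $\mathcal{L}\mathbb{A}^{1}$ cut out by the equations $D_{i}=0$ represents the functor, and you have simply unpacked this in detail. Your reduction to a single relation $D(z,\gamma,\gamma',\ldots)=0$ via compatibility with $\partial_{z}$ matches the paper's remark immediately following the definition of $\operatorname{Sol}(D)$.
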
 \begin{proof} We can easily check that the closed subspace of $\mathcal{L}\mathbb{A}^{1}$ cut out by the equations $D_{i}=0$  represents the desired functor.\end{proof}

As mentioned before, we are interested in the natural derived enhancement of the space $\operatorname{Sol}(D)$, which we will denote $\operatorname{dSol}(D)$. We first note that there is a notion of differential derived algebra, namely a derived algebra equipped with a derivation of cohomologcial degree $0$.   \begin{lemma} The pre-stack $\operatorname{dSol}(D)$, sending a derived algebra $A$ to the space of maps of differential derived algebras over $A((z))$, $\mathcal{J}_{A}/(D)_{\partial}\rightarrow A((z))$, is representable by a derived ind-scheme.  \end{lemma}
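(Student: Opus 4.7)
The plan is to derive-enhance the proof of the preceding lemma, realising $\operatorname{dSol}(D)$ as a derived vanishing locus of the family $\{D_{i}\}$ inside $\mathcal{L}\mathbb{A}^{1}$.

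First, I would unfold the mapping-space description. Since $\mathcal{J}_{A}=A((z))[x_{0},x_{1},\ldots]$ with $\partial x_{i}=x_{i+1}$ is free as a differential $A((z))$-algebra on the single generator $x_{0}$, giving a map of differential derived $A((z))$-algebras $\mathcal{J}_{A}\to A((z))$ is the same datum as an element $y(z)\in A((z))$, i.e.\ an $A$-point of $\mathcal{L}\mathbb{A}^{1}$. A lift to $\mathcal{J}_{A}/(D)_{\partial}\to A((z))$ then amounts to a null-homotopy of the image of $D$, namely of $D(z,y(z),y'(z),\ldots)\in A((z))$; the null-homotopies of $\partial^{k}D$ for $k\geq 1$ are induced automatically because $\partial_{z}$ commutes with the internal differential of $A((z))$. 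Decomposing into Laurent coefficients, this is the same as a coherent system of null-homotopies for the functions $D_{i}(y)\in A$.

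Next, I would produce the representing object level by level. Write $\mathcal{L}\mathbb{A}^{1}=\operatorname{colim}_{n}X_{n}$ with $X_{n}:=\operatorname{spec}k[y_{i}\mid i\geq -n]$, and observe that on $X_{n}$, substituting $y(z)=\sum_{i\geq -n}y_{i}z^{i}$ into $D$ yields a Laurent series whose coefficients $D_{i}$ vanish identically for $i<-N(n)$, where $N(n)$ is a bound determined by $n$, the order of $D$, and the lowest power of $z$ in the Laurent coefficients of $D\in\mathcal{J}$. Define $Y_{n}$ as the derived affine scheme represented by the Koszul-type CDGA
\[
\mathcal{O}(X_{n})\bigl[\xi_{i}\mid i\geq -N(n)\bigr],\qquad \deg\xi_{i}=-1,\quad d\xi_{i}=D_{i},
\]
which is the standard model for the derived vanishing of the system $\{D_{i}\}$ on $X_{n}$.

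Finally, I would verify the ind-compatibility. The surjection $\mathcal{O}(X_{n+1})\twoheadrightarrow\mathcal{O}(X_{n})$ obtained by setting $y_{-n-1}=0$ extends to a surjection of CDGAs $\mathcal{O}(Y_{n+1})\twoheadrightarrow\mathcal{O}(Y_{n})$ which is the identity on $\xi_{i}$ for $i\geq -N(n)$ and sends the extra generators with $-N(n+1)\leq i<-N(n)$ to $0$; this is consistent because the corresponding $D_{i}$'s vanish identically on $X_{n}$. Surjectivity on $H^{0}$ is manifest, so each transition is a closed embedding of derived affine schemes. Setting $\operatorname{dSol}(D):=\operatorname{colim}_{n}Y_{n}$ then yields the required derived ind-scheme, and by construction its $A$-points are precisely pairs $(y(z),h)$ with $y(z)\in A((z))$ and $h$ a null-homotopy of $D(z,y,y',\ldots)$, agreeing with the functor in the statement. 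The main subtleties are the bookkeeping of the bounds $N(n)$ across the ind-system and confirming that the Koszul model computes the full mapping space rather than merely its $\pi_{0}$; the latter is automatic from the freeness of $\mathcal{J}_{A}$ on $x_{0}$ together with the canonical transport of null-homotopies under $\partial$.
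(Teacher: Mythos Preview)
Your proposal is correct and follows the same approach as the paper, which simply says ``We take now the homotopy fibre of the functions $D_{i}$''; you have spelled out in detail the Koszul model and the ind-system that the paper only sketches in the subsequent remark. Your careful bookkeeping with the bounds $N(n)$ is in fact more precise than the paper's own presentation, which takes $\xi_{i}$ for $i\geq -n$ without addressing the possible mismatch in pole orders.
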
\begin{proof} We take now the homotopy fibre of the functions $D_{i}$.\end{proof}

\begin{remark} Let us write down explicitly the pro- derived algebra of functions on $\operatorname{dSol}(D)$. For each $n$, we let $\mathcal{O}^{\geq -n}(\operatorname{dSol}(D))$ be the derived algebra freely generated by elements of degree $0$, $y_{i}, i\geq -n$, and elements $\xi_{i},i\geq -n$ of degree $-1$, subject to the relation $\partial(\xi_{j})=D_{j}$. We then take the limit of this family to obtain a topological $\mathbf{CDGA}_{k}$, which models the algebra of functions on $\operatorname{dSol}(D)$. \end{remark}

\begin{remark} We can also consider the sub-space of solutions which extend to the disc $\Delta$. In this case we no longer have ind- objects, and simply have a scheme (resp. derived scheme), denoted $\operatorname{Sol}^{+}(D)$ (resp. $\operatorname{dSol}^{+}(D)$). \end{remark}

\subsection{Tangent Complex and Linearisation} We want to understand the local structure of $\operatorname{dSol}(D)$ near a solution $\gamma(z)\in\mathcal{K}$. To do so let us recall the \emph{linearisation} of $D$ at $\gamma$. This is a linear differential operator on $\mathcal{K}$.\begin{definition} We denote by $\mathcal{L}_{\gamma}(D)$ the linear differential equation on $\Delta^{*}$ defined by $$D(z,\gamma(z)+\epsilon y(z),\gamma'(z)+\epsilon y'(z),...)=0\, \operatorname{mod}\, \epsilon^{2}.$$ We will abusively also write $\mathcal{L}_{\gamma}(D)$ for the corresponding linear differential operator $\mathcal{L}_{\gamma}(D):\mathcal{K}\rightarrow\mathcal{K}$. \end{definition}

We then have the following easy lemma; \begin{lemma} The tangent complex to $\operatorname{dSol}(D)$ at $\gamma$ is equivalent to the length one complex (concentrated in degrees 0 and 1) corresponding to $\mathcal{L}_{\gamma}(D)$, i.e. we have $$\mathbb{T}_{\gamma}(\operatorname{dSol}(D))\cong\left(\mathcal{K}\xrightarrow{\mathcal{L}_{\gamma}(D)}\mathcal{K}\right).$$ A similar result holds for $\operatorname{dSol}^{+}(D)$, with $\mathcal{O}$ in place of $\mathcal{K}$.\end{lemma}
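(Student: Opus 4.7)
The plan is to exploit the explicit semi-free presentation of the structure CDGA of $\operatorname{dSol}(D)$ given in the remark above, read off the cotangent complex at $\gamma$, and then Tate-dualise to obtain the tangent complex.

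First I would recall that $\mathcal{O}^{\geq -n}(\operatorname{dSol}(D))$ is the free CDGA on generators $y_{i}$ in degree $0$ and $\xi_{j}$ in degree $-1$ (for $i, j \geq -n$), with differential determined by $\partial \xi_{j} = D_{j}$. For such a semi-free CDGA the cotangent complex is simply the free module on the symbols $dy_{i}$ and $d\xi_{j}$, with the differential obtained by extending $\partial$ via the de Rham rule. Pulling back along the $k$-point $\gamma$, this becomes the two-term complex
$$\operatorname{span}_{k}\{d\xi_{j}\} \xrightarrow{\;d\xi_{j} \,\mapsto\, \sum_{i} (\partial D_{j} / \partial y_{i})|_{\gamma} \, dy_{i}\;} \operatorname{span}_{k}\{dy_{i}\}$$
concentrated in cohomological degrees $-1$ and $0$. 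Taking the pro-limit over $n$ and dualising into Tate sheaves converts both sides into copies of $\mathcal{K}$, using that the Laurent basis $z^{i}$ is dual to the generator $y_{i}$ (and likewise to $\xi_{i}$).

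Next I would check that, under this identification, the dual of the above differential is exactly the operator $\mathcal{L}_{\gamma}(D)$. This is a direct computation: substituting $y(z) = \gamma(z) + \epsilon \delta(z)$ with $\delta(z) = \sum_{i} \delta_{i} z^{i}$ into $D$ and extracting the $z^{j}$-coefficient modulo $\epsilon^{2}$ yields precisely $\sum_{i} (\partial D_{j} / \partial y_{i})|_{\gamma} \delta_{i}$, so the matrix of $\mathcal{L}_{\gamma}(D)$ in the Laurent basis coincides with the matrix $\{(\partial D_{j} / \partial y_{i})|_{\gamma}\}$. The $\operatorname{dSol}^{+}(D)$ case is obtained by restricting every index to $i, j \geq 0$, which replaces $\mathcal{K}$ by $\mathcal{O}$ throughout.

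The main obstacle I anticipate is the bookkeeping around Tate duality: one must verify that the naive pointwise dual of the generator-wise description above really does compute the tangent complex in $\mathbf{Tate}$ rather than only in $\mathbf{Pro}(QC)$. This reduces to the observation that the pro-structure on generators $y_{i}$, indexed by truncations $i \geq -n$, is Tate-dual to the ind-structure on $\mathcal{K} = \operatorname{colim}_{n} z^{-n}\mathcal{O}$, matching the ind-affine presentation of $\mathcal{L}\mathbb{A}^{1}$ recalled earlier. Once this compatibility is in place, the rest of the argument reduces to the matrix identification sketched above.
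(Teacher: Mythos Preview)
Your proposal is correct and follows essentially the same route as the paper: both arguments use the explicit semi-free model in the generators $y_{i},\xi_{j}$, identify these with Laurent monomials in $\mathcal{K}$, and then verify by the chain-rule/Taylor computation that the induced differential is $\mathcal{L}_{\gamma}(D)$. The only cosmetic difference is that you pass through the cotangent complex and Tate-dualise, whereas the paper writes down the tangent complex directly as derivations $\partial_{y_{i}},\partial_{\xi_{j}}$ and maps them to $\mathcal{K}$; the content of the verification is identical.
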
 \begin{proof}This can be deduced functorially, however we choose to prove it directly by constructing an isomorphism. Recalling the explicit description of functions on $\operatorname{dSol}(D)$, in terms of the variables $y_{i}$ and $\xi_{j}$, the tangent complex is generated by degree $0$ elements $\partial_{y_{i}}$ and degree $1$ elements $\partial_{\xi_{i}}$, with the condition that these are topologically negligible as $i\rightarrow \infty$. We map these to $z^{-i}\in\mathcal{K}$ in their respective cohomological degrees, and it is easy to compute that this intertwines the respective differentials. A similar argument works for $\mathcal{O}$. \end{proof}

In fact we can work globally. Given $D$ we consider now the expression $$\mathcal{L}(D):=D(z,y(z)+\epsilon w(z),y'(z)+\epsilon w'(z),...)=0\, \operatorname{mod}\, \epsilon^{2}.$$ We consider this as a linear differential operator (in the $z$-direction) in the dependent variable $w$, $$\mathcal{L}(D):\mathcal{A}\rightarrow\mathcal{A}.$$  We may also consider this as a length one complex of sheaves on $\mathcal{L}\mathbb{A}^{1}$, noting that the differential operator is linear over $\mathcal{L}\mathbb{A}^{1}$. The generalisation of the above lemma is as follows; \begin{lemma} We have an isomorphism $$\mathbb{T}(\operatorname{dSol}(D))\cong \iota^{*}\left(\mathcal{A}\xrightarrow{\mathcal{L}(D)}\mathcal{A}\right),$$ with $\iota$ the natural map $\operatorname{dSol}(D)\rightarrow\mathcal{L}\mathbb{A}^{1}$. \end{lemma}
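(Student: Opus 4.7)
The strategy is to upgrade the pointwise tangent-complex calculation of the previous lemma to a global statement, using the explicit semi-free model of $\mathcal{O}(\operatorname{dSol}(D))$. First, from the presentation in the earlier remark, $\mathbb{L}_{\operatorname{dSol}(D)}$ is the module of K\"ahler differentials generated (topologically, in a pro sense) by $dy_i$ in degree $0$ and $d\xi_j$ in degree $-1$, with internal differential
$$\delta(d\xi_j) \;=\; d(D_j) \;=\; \sum_i (\partial D_j/\partial y_i)\,dy_i.$$
Tate-dualising, $\mathbb{T}_{\operatorname{dSol}(D)}$ is the two-term Tate complex, concentrated in degrees $0$ and $1$, with topological generators $\partial_{y_i}$ and $\partial_{\xi_j}$ and differential $\partial_{y_i}\mapsto\sum_j(\partial D_j/\partial y_i)\,\partial_{\xi_j}$.

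I would then build the candidate isomorphism directly: the map $\iota^*(\mathcal{A}\xrightarrow{\mathcal{L}(D)}\mathcal{A})\to \mathbb{T}_{\operatorname{dSol}(D)}$ sending $z^i$ (in the appropriate cohomological degree) to $\partial_{y_i}$ or $\partial_{\xi_i}$, extended $\mathcal{O}(\operatorname{dSol}(D))$-linearly and continuously. That this is an isomorphism of the underlying graded Tate modules is essentially tautological, and restricting along any $k$-point $\gamma$ recovers the identification of the previous lemma.

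The only real content is compatibility with differentials, which reduces to the identity
$$\partial D_j/\partial y_i \;=\; [z^j]\,\mathcal{L}(D)(z^i),$$
obtained by unwinding the definitions: perturbing $y_i$ by $\epsilon$ shifts $y(z)$ by $\epsilon z^i$, hence shifts $D$ by $\epsilon\,\mathcal{L}(D)(z^i)$, whence its $z^j$-coefficient $D_j$ shifts by $\epsilon\,[z^j]\mathcal{L}(D)(z^i)$. Granted this, the tangent differential transports under the candidate map to $z^i\mapsto \sum_j [z^j]\mathcal{L}(D)(z^i)\,z^j = \mathcal{L}(D)(z^i)$, which is exactly the differential on the right-hand complex.

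I expect the main obstacle to be not the algebra but the bookkeeping needed to upgrade this pointwise picture to a statement of Tate sheaves on the derived ind-scheme: one must verify that the isomorphism respects the pro-structure on $\mathbb{L}$ and the implicit linear topology on $\mathcal{A}$, and that the sums $\sum_j(\partial D_j/\partial y_i)\partial_{\xi_j}$ converge in the appropriate Tate sense. This reduces to the same topological negligibility conditions already invoked in the pointwise lemma, together with $\mathcal{O}(\mathcal{L}\mathbb{A}^1)$-linearity of $\mathcal{L}(D)$, so presents no new difficulty.
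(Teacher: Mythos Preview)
Your proposal is correct and follows essentially the same approach as the paper: construct the isomorphism explicitly on the topological generators $\partial_{y_i},\partial_{\xi_j}$ of the tangent complex and verify compatibility with the differentials, exactly as in the pointwise lemma. The paper's own proof is in fact the single sentence ``the isomorphism is given by the same formula as the lemma above''; you have simply unpacked that formula and supplied the verification (the identity $\partial D_j/\partial y_i=[z^j]\mathcal{L}(D)(z^i)$) that the paper leaves implicit. The only discrepancy is a harmless index convention: the paper sends $\partial_{y_i}\mapsto z^{-i}$ whereas you send $\partial_{y_i}\mapsto z^{i}$; your choice is the one dictated by the perturbation argument you give and is internally consistent.
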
 \begin{proof} The isomorphism is given by the same formula as the lemma above. \end{proof}

\begin{remark} \begin{itemize} \item This allows a computation of the cotangent complex to $\operatorname{dSol}(D)$ as well, indeed it is equivalent to the length one complex (concentrated in degrees -1 and 0 now) corresponding to the formal adjoint to $\mathcal{L}_{\gamma}(D)$, with respect to the residue form. \item Note that this proves that $\operatorname{dSol}(D)$ is Tate. \item We will later be interested in symplectic forms, the above computation makes it clear that it is unnatural to expect one on $\operatorname{dSol}^{+}$, as $\mathcal{O}$ is not self-dual. \end{itemize}\end{remark}

\begin{example}\begin{itemize}\item We work here on the disc $\Delta$ and consider the \emph{Clairaut equation}, which depends on a polynomial $F$. We let $D_{F}$ be the equation $$y=zy'+F(y').$$ We find an $\mathbb{A}^{1}$ of solutions, which is to say a morphism $\mathbb{A}^{1}\rightarrow\operatorname{dSol}^{+}(D_{F})$, mapping $$t\mapsto \gamma_{t}(z):=tz+F(t).$$  We can compute that the linearised equation at $\gamma_{t}$ to be given by $$\mathcal{L}_{\gamma_{t}}(D_{F}): y=(z+F'(t))y',$$ so that the corresponding differential operator on $\mathcal{O}$ gives the tangent complex. It is easy to see then that there is higher tangent cohomology at the solution $\gamma_{t}$ of $D_{F}$, if and only if $t$ is a root of $F'$. We are not aware of an interpetation of these higher tangent classes in terms of the classical geometry of the equation $D_{F}$.
\item Again working over $\Delta$ let us consider $$D: (y')^{2}=4y,$$ with the $\mathbb{A}^{1}$ of solutions $$\gamma_{\lambda}(z):=(z+\lambda)^{2}.$$ The linearised equation at the solution $\gamma_{\lambda}$ is $$(z+\lambda)y'=y$$ and an easy computation again shows that we have isolated points at which there is higher tangent cohomology, this time precisely at $\lambda=0$.
\item We work now on $\Delta^{*}$. Fix elements $a,b\in \mathcal{K}$ and consider the non-linear equation $$D: \,y^{2}=ay''+by',$$ and let $\gamma$ be a solution. We compute the linearised equation as $$\mathcal{L}_{\gamma}(D):\,2\gamma y=ay''+by',$$ and a simple computation tells us that the cotangent complex to this equation is shifted self-dual via the residue form iff $b=a'$, so that the equation is equivalently $$y^{2}=(ay')'.$$ \end{itemize}\end{example}

\subsection{Linear Equations and -1 Symplectic Forms}
Let us now deal with the case of a linear differential equation $D=0$, with the corresponding linear endomorphism of $\mathcal{K}$ denoted $\mathcal{L}_{D}$ to avoid confusion. A consequence of the above argument is that the tangent complex $\mathbb{T}(\operatorname{dSol}(D))$ is free with fibre the complex $\mathcal{L}_{D}:\mathcal{K}\rightarrow\mathcal{K}$. Now let us assume that $D$ is self adjoint with respect to the residue pairing $$(f,g):=\int(fg)dz:=\operatorname{res}_{0}(fgdz),$$ in this case we see that the pairing $\int$ gives an isomorphism $$\mathbb{T}(\operatorname{dSol}(D))\rightarrow\mathbb{L}(\operatorname{dSol}(D))[-1],$$ which in fact we can see comes from a (-1)-symplectic struture $\omega_{\int}$, which is written with respect to the $y$ and $\xi$ variables as $$\omega_{\int}=\sum_{i}dy_{i}d\xi_{-1-i}.$$ We state this now as a lemma, whih we note gives a purely geometric characterisation of self-adjointness - which for example could be generalised immmediately to non-linear equations. \begin{lemma} If $D$ is a linear differential equation on the disc, then it is self-adjoint if and only if the residue form induces a (-1)-symplectic structure on the derived space of solutions $\operatorname{dSol}(D)$. \end{lemma}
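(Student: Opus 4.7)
The plan is to verify both directions of the equivalence by a single direct computation on the candidate $2$-form $\omega := \omega_{\int} = \sum_i dy_i\wedge d\xi_{-1-i}$, exploiting the explicit CDGA model of $\operatorname{dSol}(D)$ with generators $y_i$ and $\xi_j$ given in the earlier remark. Because $D$ is linear, we may write $D_j = \sum_k M_{jk}\, y_k$, where $M_{jk}$ is the $z^j$-coefficient of $\mathcal{L}_D(z^k)$. A short computation with the residue pairing $(z^a,z^b)=\delta_{a+b,-1}$ gives the matrix coefficients of the adjoint, $(M^{*})_{ik} = M_{-1-k,-1-i}$, so that $\mathcal{L}_D$ is self-adjoint if and only if $M_{ab} = M_{-1-b,-1-a}$ for all $a,b$.

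For the forward direction, note that the de Rham differential annihilates $\omega$ on the nose, as $\omega$ is a sum of wedges of basic one-forms with constant coefficients. Closedness of $\omega$ as a degree $-1$ two-form in the Tate derived de Rham bi-complex therefore reduces to the single condition $\partial\omega = 0$ for the internal differential, all higher correction terms being available as zero. Using $\partial(d\xi_j) = d(\partial\xi_j) = \sum_k M_{jk}\, dy_k$, one computes
\[
\partial\omega \;=\; \sum_{i,k} M_{-1-i,k}\, dy_i\wedge dy_k,
\]
and antisymmetry of the wedge product shows this vanishes iff $M_{-1-i,k} = M_{-1-k,i}$ for all $i,k$. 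Reindexing $a := -1-i$, $b := k$ recovers exactly the self-adjointness condition above.

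Non-degeneracy of $\omega$ is automatic and does not depend on $D$: the contraction map $\omega^{\sharp}:\mathbb{T}(\operatorname{dSol}(D))\to \mathbb{L}(\operatorname{dSol}(D))[-1]$ sends $\partial_{y_j}\mapsto d\xi_{-1-j}$ and $\partial_{\xi_j}\mapsto \pm\, dy_{-1-j}$. Under the identifications of $\mathbb{T}$ and $\mathbb{L}[-1]$ from the previous lemma (and the remark identifying $\mathbb{L}$ with the adjoint complex in degrees $-1,0$), this is precisely the Tate self-duality $\mathcal{K}\cong \mathcal{K}^{\vee}$ furnished by the residue pairing, hence a levelwise isomorphism. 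Thus whenever the closedness condition $\partial\omega = 0$ holds, $\omega$ is a genuine $(-1)$-shifted symplectic form. The converse then reads off the same computation in reverse: the hypothesis that the residue pairing defines a $(-1)$-symplectic structure unpacks to the statement that $\omega$ is closed, and vanishing of $\partial\omega$ forces the required symmetry of the $M_{jk}$, i.e.\ self-adjointness of $\mathcal{L}_D$.

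The main obstacle is really bookkeeping rather than mathematical content: one must set up the Tate derived de Rham complex carefully enough that the doubly-infinite sum defining $\omega$ converges in the appropriate pro-topology (which it does since $d\xi_{-1-i}\to 0$ as $i\to-\infty$), and verify that ``closed $(-1)$-form'' in this setting really does reduce to $\partial\omega=0$ given the triviality of the de Rham differential on $\omega$. Once those foundational checks are recorded, the lemma is a transparent exercise in the linear algebra of Laurent matrices.
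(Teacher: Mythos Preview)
Your proof is correct and follows essentially the same approach as the paper, whose argument is the brief paragraph preceding the lemma: one writes down $\omega_{\int}=\sum_i dy_i\, d\xi_{-1-i}$, observes that non-degeneracy is the residue self-duality of $\mathcal{K}$, and that closedness for the internal differential is equivalent to self-adjointness of $\mathcal{L}_D$. Your version is simply more explicit, unpacking $D_j=\sum_k M_{jk}y_k$ and verifying $\partial\omega=0\Leftrightarrow M_{ab}=M_{-1-b,-1-a}$ in coordinates, but the underlying idea is identical.
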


 Now, we can in fact say more, indeed we can realise $\operatorname{dSol}(D)$ as a derived critical locus of a function on the loop space $\mathcal{L}\mathbb{A}^{1}$. It can in fact be proven that in the case of a self-adjoint linear differential equation $$D:=\sum_{i}a_{i}(z)y^{(i)}(z)=0,$$ $\operatorname{dSol}(D)$ is the derived critical locus of the function $$\frac{1}{2}\int \sum_{i}a_{i}(z)y(z)y^{(i)}(z)dz$$ on $\mathcal{L}\mathbb{A}^{1}$. It is the goal of the next section to generalise this result to non-linear equations. 

\section{variational calculus}

\subsection{Euler-Lagrange Equations} In this subsection we adapt some standard notions of variational calculus to our purely algebraic setting. 
We begin with an element $D$ of $\mathcal{J}$, and form the function on $\mathcal{L}\mathbb{A}^{1}$, $$\alpha_{D}:=\int D(z,y,y',y'',...)dz.$$ We note that if $D$ is a \emph{total derivative}, which is to say lies in the image of the derivation $\partial_{z}$ of $\mathcal{J}$, then $\alpha_{D}=0$. We expect then that the critical locus of $\alpha_{D}$ should be described by the Euler-Lagrange equations. To formulate these let us recall the variational derivative, acting on $\mathcal{J}$.

\begin{definition} The variational derivative, $\delta:\mathcal{J}\rightarrow\mathcal{J}$ is defined as follows; writing $\partial_{x_{i}}=:\partial_{i}$, we set $$\delta:=\sum_{i}(-1)^{i}\partial_{z}^{i}\partial_{i}.$$ \end{definition}

We have now a purely algebraic version of the Euler-Lagrange equations;

\begin{lemma} (\emph{Euler-Lagrange}.) For an element $D\in\mathcal{J}$, we have an equality of functions on $\mathcal{L}\mathbb{A}^{1}$, $$\frac{\partial}{\partial y_{i}}\int D(z,y,y',y'',...)dz=\int (\delta D)(z,y,y',y'',...)z^{i}dz.$$ \end{lemma}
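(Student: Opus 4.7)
The plan is to prove this by expanding the definitions, applying the chain rule, and then using integration by parts in the form of the vanishing of residues of total derivatives. Unpacking notation, the left-hand side is $\frac{\partial}{\partial y_i} D_{-1}$, i.e.\ the ordinary partial derivative (with respect to the coordinate $y_i$ on $\mathcal{L}\mathbb{A}^1$) of the $z^{-1}$-coefficient of $D(z,y(z),y'(z),\ldots)$. Writing $y(z)=\sum_j y_j z^j$ and differentiating term-by-term, the basic computation to do first is
\[
\frac{\partial y^{(k)}(z)}{\partial y_i} \;=\; \partial_z^k(z^i),
\]
valid for all integers $i,k$ with the usual convention $\partial_z^k z^i = \frac{i!}{(i-k)!}z^{i-k}$. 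Since the partial derivative $\partial/\partial y_i$ commutes with extracting the $z^{-1}$-coefficient, the chain rule then yields
\[
\frac{\partial}{\partial y_i}\int D(z,y,y',\ldots)\,dz \;=\; \sum_{k\geq 0}\int (\partial_k D)(z,y,y',\ldots)\,\partial_z^k(z^i)\,dz,
\]
where $\partial_k=\partial/\partial x_k$. Only finitely many terms are nonzero for any given $D$ since $D\in\mathcal{J}$ depends on only finitely many $x_k$.

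Next I would invoke integration by parts. The key input is the identity $\int \partial_z(f)\,dz=0$, which is simply the fact that $\operatorname{res}_0$ kills exact one-forms; this is valid even in our topological setting because $\partial_z$ preserves the $z$-filtration used to define the pro-algebra $\mathcal{A}$, and $\operatorname{res}_0$ is continuous. From this one obtains
\[
\int (\partial_k D)\,\partial_z^k(z^i)\,dz \;=\; (-1)^k\int \partial_z^k(\partial_k D)\cdot z^i\,dz
\]
by iterating the Leibniz rule $k$ times. Summing over $k$ and using the definition $\delta = \sum_k(-1)^k\partial_z^k\partial_k$ of the variational derivative gives exactly $\int(\delta D)(z,y,y',\ldots)\,z^i\,dz$, which is the desired right-hand side.

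I expect no serious obstacle: the content is really just the classical derivation of the Euler--Lagrange equations rephrased. The only care needed is bookkeeping — namely verifying the chain-rule identity $\partial y^{(k)}/\partial y_i = \partial_z^k z^i$ uniformly in the sign of $i$, and checking that the various operations (differentiation under the residue, integration by parts, interchange of sums) are legitimate operations on the topological algebra of functions on $\mathcal{L}\mathbb{A}^1$. Since $D$ involves only finitely many jet variables and $\partial_z$ acts continuously, these are all routine.
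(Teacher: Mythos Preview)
Your proof is correct and is essentially the same argument as the paper's: both compute the $y_i$-derivative via the chain rule and then integrate by parts using that residues of total $z$-derivatives vanish. The only cosmetic difference is that the paper packages the computation using an auxiliary perturbation $y\mapsto y+\epsilon w$ and reads off the coefficient of $\epsilon w_i$, whereas you do the chain rule directly for a fixed $i$.
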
 \begin{proof}Let us introduce the expression $w=w(z)=\sum_{i}w_{i}z^{i}$ in indeterminates $w_{j}$, we write $w',w'',...$ in the evident manner. Now a little thought shows that $\frac{\partial\alpha_{D}}{\partial y_{i}}$ is the coefficient of $\epsilon w_{i}$ in the expression $$\int D(z,y+\epsilon w,y'+\epsilon w', y''+\epsilon w'',...)dz.$$ Now we recall the elementary fact that $ab^{(n)}$ is equivalent modulo total derivatives to $(-1)^{n}a^{(n)}b$ and we integrate by parts to remove any derivatives of $w$, obtaining $$\int (\delta D)(z,y,y',y'',...)w(z)dz$$ as the term of order $\epsilon$. Recalling that $w(z)=\sum w_{j}z^{j}$, we see that the coefficient of $w_{j}$ is thus $(\delta D)_{-1-j}$ as required. \end{proof}

\begin{corollary} The residue pairing endows the space $\operatorname{dSol}(\delta D)$ with a Tate (-1)-symplectic form. \end{corollary}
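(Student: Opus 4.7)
Proof plan. The strategy is to realise $\operatorname{dSol}(\delta D)$ as a derived critical locus on the loop space $\mathcal{L}\mathbb{A}^{1}$ and then invoke the Tate analogue of the standard PTVV construction of $(-1)$-shifted symplectic forms on derived critical loci.

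By the Euler--Lagrange lemma just proved, the defining functions $(\delta D)_{j}$ of $\operatorname{dSol}(\delta D)$ agree with the partial derivatives $\partial_{y_{-1-j}}\alpha_{D}$ of the single function $\alpha_{D}\in\mathcal{O}(\mathcal{L}\mathbb{A}^{1})$. Consequently the homotopy vanishing locus of the $(\delta D)_{j}$ is, by definition, the derived critical locus $\operatorname{dCrit}(\alpha_{D})$. Explicitly, the Koszul presentation of $\operatorname{dSol}(\delta D)$ given in the remark following the definition of $\operatorname{dSol}$ --- generators $y_{i},\xi_{j}$ in degrees $0$ and $-1$, with $\partial\xi_{j}=(\delta D)_{j}$ --- is literally the standard Koszul model of $\operatorname{dCrit}(\alpha_{D})$ sitting inside the Tate $(-1)$-symplectic space $\mathcal{L}(T^{*}[-1]\mathbb{A}^{1})$.

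I then invoke the Tate symplectic structure on $\mathcal{L}(T^{*}[-1]\mathbb{A}^{1})$ explicitly described in the second bullet of the Examples block, namely $\omega=\sum_{i}dy_{i}d\xi_{-1-i}$. Since $\operatorname{dCrit}(\alpha_{D})$ is the derived intersection of the zero section with the graph of $d\alpha_{D}$, both Lagrangian in $\mathcal{L}(T^{*}[-1]\mathbb{A}^{1})$, it inherits a Tate $(-1)$-symplectic form given in the Koszul model above by the restriction of $\omega$; convergence of the infinite sum is automatic from the topology on the pro-CDGA of functions. Under the identification $\partial_{y_{i}}\mapsto z^{-i}$, $\partial_{\xi_{j}}\mapsto z^{-j}$ from the lemma computing the tangent complex, this form pulls back precisely to the residue pairing $(f,g)\mapsto\operatorname{res}_{0}(fg\,dz)$ on the two halves of $\mathbb{T}(\operatorname{dSol}(\delta D))$; the required chain-level compatibility with the differential $\mathcal{L}(\delta D)$ is simply the self-adjointness of Euler--Lagrange operators, which in turn is the identity $ab^{(n)}\equiv(-1)^{n}a^{(n)}b$ modulo total derivatives already used in the previous proof.

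The main obstacle is citing a Tate PTVV-style result in the ind-affine, topological setting, but because the critical locus admits the manifest global Koszul presentation above, the closed, non-degenerate Tate $(-1)$-form $\omega$ is visibly available and the verification reduces to bookkeeping with the index shift $i\mapsto-1-i$ to match the residue pairing.
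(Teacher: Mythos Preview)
Your proposal is correct and follows essentially the same route as the paper: identify $\operatorname{dSol}(\delta D)$ with the derived critical locus of $\alpha_{D}$ via the Euler--Lagrange lemma, invoke the standard Tate $(-1)$-symplectic form on a derived critical locus over the formally smooth Tate space $\mathcal{L}\mathbb{A}^{1}$, and check that this form is $\omega_{\int}=\sum_{i}dy_{i}d\xi_{-1-i}$, i.e.\ the residue pairing. You simply supply more of the bookkeeping (the Koszul model, the Lagrangian-intersection picture, and the self-adjointness check) than the paper does.
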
\begin{proof} The Euler-Lagrange equations produce an isomorphism $\operatorname{dSol}(\delta D)\cong \operatorname{dcrit}(\alpha_{A})$, where $\operatorname{dcrit}$ denotes the derived critical locus. It is a standard result that a derived critical locus of a function on a smooth space is endowed with a (-1)-symplectic form and this generalises readily to a function on a formally smooth Tate space such as $\mathcal{L}\mathbb{A}^{1}$ . One then checks that the standard symplectic form on $\operatorname{dcrit}(\alpha_{D})$ corresponds to the form $\omega_{\int}:=\sum dy_{i}d\xi_{-1-i}$ on $\operatorname{dSol}(\delta D)$. \end{proof}

The main result of this note is a sort of converse to the above. First a couple of definitions; \begin{definition} We call a \emph{framing} of a derived ind-scheme equipped with a map to $\mathcal{L}\mathbb{A}^{1}$, a lift of the tangent complex to $\mathcal{L}\mathbb{A}^{1}$ and we consider $\operatorname{dSol}(D)$ as framed by the length one complex of sheaves $\mathcal{A}\xrightarrow{\mathcal{L}(D)}\mathcal{A}$.\end{definition} \begin{definition} We will say that the residue form induces a (-1)-symplectic form on $\operatorname{dSol}(D)$ if $\operatorname{dSol}(D)$ admits a (-1)-symplectic form which lifts to the framing, on which it acts as the residue form.\end{definition}  \begin{theorem} Let $D=0$ be a differential equation on $\Delta^{*}$ such that the residue pairing endows $\operatorname{dSol}(D)$ with a Tate (-1)-symplectic form. Then $D=0$ admits a variational formulation, namely there is an $A\in\mathcal{J}$ with $\delta A=D$. \end{theorem}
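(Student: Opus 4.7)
The plan is to unpack the geometric hypothesis into the classical Helmholtz self-adjointness condition on $\mathcal{L}(D)$, and then invoke the inverse problem of the calculus of variations to produce the Lagrangian $A$.

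First, I would translate the hypothesis into operator-theoretic form. By the global tangent complex computation, the framing of $\operatorname{dSol}(D)$ is $\iota^{*}(\mathcal{A}\xrightarrow{\mathcal{L}(D)}\mathcal{A})$, with $\mathcal{A}$ sitting in degrees $0$ and $1$. Its Tate dual, shifted into degrees $-1$ and $0$, is $\iota^{*}(\mathcal{A}\xrightarrow{\mathcal{L}(D)^{*}}\mathcal{A})$, where $\mathcal{L}(D)^{*}$ denotes the formal adjoint of $\mathcal{L}(D)$ with respect to the residue pairing on $\mathcal{A}$. Demanding that the residue form induce a $(-1)$-symplectic structure which lifts to the framing and acts coordinate-wise as residue then forces the operator identity $\mathcal{L}(D) = \mathcal{L}(D)^{*}$: in other words, the Fr\'echet derivative of $D$, viewed as a differential operator in the dependent variable with coefficients in $\mathcal{A}$, is formally self-adjoint.

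Second, I would appeal to the Helmholtz-Sonin theorem from the inverse problem of the calculus of variations: an element $D \in \mathcal{J}$ lies in the image of $\delta$ if and only if $\mathcal{L}(D)$ is formally self-adjoint. The ``only if'' direction amounts to the symmetry-of-mixed-partials observation that the Fr\'echet derivative of any Euler-Lagrange expression $\delta A$ is self-adjoint. The ``if'' direction, which is what we need, is classical and proceeds via the Vainberg--Volterra homotopy:
$$A := \int_{0}^{1} x_{0}\, D(z, t x_{0}, t x_{1}, t x_{2}, \ldots)\, dt \in \mathcal{J}.$$
One then verifies $\delta A = D$ by differentiating under the $t$-integral, integrating by parts in $z$ exactly as in the proof of the Euler-Lagrange lemma, and using the self-adjointness of $\mathcal{L}(D)$ at every $t$ to collect the resulting terms into $\int_{0}^{1} \frac{d}{dt}\bigl(t\, D(z, t x_{0}, \ldots)\bigr)\, dt = D$.

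The main obstacle is the first step: extracting the clean operator equality $\mathcal{L}(D) = \mathcal{L}(D)^{*}$ from the phrase ``the residue form induces a $(-1)$-symplectic form which lifts to the framing.'' This requires tracking the data of such a lift in the Tate setting and confirming that, because the framing is concentrated in two adjacent degrees, its existence is equivalent to -- and detects -- the pointwise self-adjointness of $\mathcal{L}(D)$. Once this is in hand, the passage from self-adjointness to the existence of $A$ is a standard variational argument that adapts without modification to our setting, since every manipulation stays inside the differential algebra $\mathcal{J}$.
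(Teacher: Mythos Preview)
Your proposal is correct and lands on the same endgame as the paper (the Vainberg--Tonti/Volterra primitive), but it reaches the Helmholtz conditions by a cleaner route. The paper writes the hypothesis as closedness of $\omega_{\int}=\sum dy_{i}\,d\xi_{-1-i}$ for the internal differential on $\mathcal{O}(\operatorname{dSol}(D))$, obtaining the coefficient identities $\partial D_{-1-i}/\partial y_{j}=\partial D_{-1-j}/\partial y_{i}$, and then spends the bulk of the argument deriving the Helmholtz conditions $H_{l}(D)$ from these by repeated integration by parts in $z$ (with an attendant subtlety about not being able to integrate $z^{-1}$, handled by a generic-vanishing argument). You instead read the hypothesis directly as self-adjointness $\mathcal{L}(D)=\mathcal{L}(D)^{*}$ of the Fr\'echet derivative with respect to the residue pairing; since $\partial D_{-1-i}/\partial y_{j}=\operatorname{res}\bigl(z^{i}\mathcal{L}(D)(z^{j})\,dz\bigr)$, the paper's coefficient identities are literally the matrix entries of this operator equality, so your shortcut is legitimate and bypasses the integration-by-parts step and its generic-vanishing caveat. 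What the paper's more explicit route buys is a visible derivation of the individual conditions $H_{l}(D)$ in jet coordinates, whereas your argument packages them as a single operator identity and then defers to the classical Helmholtz--Sonin theorem; either way the final construction of $A$ is the same.
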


\begin{proof} With respect to our model with variables $y_{i}$ and $\xi_{j}$, we see that we are assuming $\sum dy_{i}d\xi_{-1-i}$ is a Tate symplectic form. This will hold precisely if it is closed for the internal differential on $\mathcal{O}(\operatorname{dSol}(D))$, with respect to our usual model. Such is true precisely if we have the following conditions, which we think of as a sort of \emph{integrability}, we must have that for all $i,j$, $$\frac{\partial D_{-1-i}}{\partial y_{j}}=\frac{\partial D_{-1-j}}{\partial y_{i}}.$$ We note now that this easily implies that there is some $A\in \mathcal{O}(\mathcal{L}\mathbb{A}^{1})$ such that $\partial_{y_{i}}A=D_{i}$, simply as de Rham cohomology of $\mathcal{L}\mathbb{A}^{1}$ vanishes. This is not good enough however, as it does not produce a variational formulation.

To produce such, we must appeal to the inverse theory in the calculus of variations. In particular it is known (cf. \cite{Kru}) that we must check that the following \emph{Helmholtz integrability conditions} hold;  for all $l\geq 1$ we have the equality which we denote $H_{l}(D)$; $$(1+(-1)^{l+1})\partial_{l}D=\sum_{k>l}(-1)^{k}\binom{k}{l}\partial_{z}^{k-l}\partial_{k}D.$$

Now we rewrite the equations $$\frac{\partial D_{-1-i}}{\partial y_{j}}=\frac{\partial D_{-1-j}}{\partial y_{i}},$$ using the Euler-Lagrange equations, as $$\int (z^{j}\delta(z^{i}D)-z^{i}\delta(z^{j}D))=0,$$ for all $i,j$.

 For brevity assume that $D$ is of order $2$. We compute that the above integrand is given by $$(j-i)z^{i+j-1}\partial_{y'}D+2(i-j)z^{i+j-1}(\partial_{y''}D)'+(i(i-1)-j(j-1))z^{i+j-2}\partial_{y''}D,$$ we now integrate the middle term by parts so that we obtain a common factor of $z^{i+j-1}$. We deduce that for all $i,j$ we have $$\int z^{i+j-1}(\partial_{y'}D-(\partial_{y''}D)')dz=0,$$ whence we see that the Helmholtz condition, $\partial_{y'}D=(\partial_{y''}D)'$ holds. 

 In general we argue as follows; we first integrate by parts so that we have a common factor of $z^{i+j-1}$. This gives the first Helmholtz equation $H_{1}$, which we note implies that $\partial_{y'}D$ is a total derivative in $z$, this allows us then substitute for $\partial_{y'}D$ and then integrate by parts until we obtain a common factor of $z^{i+j-2}$, from which we obtain the second Helmholtz condition, and so on. Note that there is a subtlety, namely that we can only perform the above integration by parts for generic values of $i,j$. Indeed, we cannot integrate $z^{-1}$. Nonetheless it is easy to see that generic vanishing of $D_{n}$, which is to say vanishing for all but finitely many $n$, implies vanishing of $D$, assuming $D$ is non-constant in the dependent variable $y$.

Now according to a standard result in the inverse theory of calculus of variations, we can construct a Lagrangian, cf the results of chapter 4 of \cite{Kru}. In fact we can construct one explicitly according to the recipe of Vainberg-Tonti (again cf. chapter 4 of \cite{Kru}). Such is given by $$\mathcal{L}:=y\int_{0}^{1}D(z,ty,ty',ty'',...)dt,$$ where we interpret the definite integration as a linear form in the evident manner.

\end{proof}

 \begin{remark}\begin{itemize} There are a couple of notable aspects of this result. \item It is crucial that we work on a punctured disc $\Delta^{*}$. The corresponding theorem is not true on $\Delta$. \item It is not the case that $\operatorname{dSol}(D)$ admitting a (-1)-symplectic form implies that $D$ admits a variational formulation. Indeed there are linear differential operators $D$ whose adjoint operator $D^{*}$ is simply $-D$, for example $D=ay'+\frac{a'}{2}y$. In this case $\operatorname{dSol}$ is certainly (-1)-symplectic although $D$ does not admit a variational formulation.  \end{itemize}\end{remark}

\end{document}